\documentclass[12pt,leqno,amscd, amsfonts, amssymb,pstricks,verbatim]{amsart}
\usepackage{mathrsfs}
\usepackage{amsmath}
\usepackage{hyperref}
\usepackage{bookmark}
\usepackage{cite}

\def\b{\beta}
\def\d{\Delta}

\def\Z{\mathbb{Z}}
\def\N{\mathbb{N}}

\def\C{\mathbb{C}}

\numberwithin{equation}{section}
\newtheorem{theo}{Theorem}[section]

\newtheorem{lemm}[theo]{Lemma}

\allowdisplaybreaks

\begin{document}
	
	\title[2-Local derivations on a Block-type Lie algebra]{2-Local derivations on a Block-type Lie algebra}
	
\author{Qiufan Chen}

\address{Chen: Department of Mathematics, Shanghai Maritime University,
 Shanghai, 201306, China.}\email{chenqf@shmtu.edu.cn}	
	
\author{Xiaohan Guo}

\address{Guo: Department of Mathematics, Shanghai Maritime University,
 Shanghai, 201306, China.}\email{202531010005@stu.shmtu.edu.cn}

\subjclass[2010]{17B40, 17B65, 17B68}
	
	\keywords{Block-type Lie algebra, derivation, 2-local derivation}
	
\thanks{This work is supported by National Natural Science Foundation of China (Grant Nos. 12271345 and 12361006) and Science and Technology Commission of Shanghai Municipality (Grant No. 25ZR1402183). }
	
\begin{abstract}
The present paper is devoted to study 2-local derivations on the Block-type Lie algebra which is an infinite-dimensional Lie algebra with some outer derivations. We  prove that
every 2-local derivation on the Block-type Lie algebra is a derivation.
\end{abstract}
	
	\maketitle
	\setcounter{tocdepth}{1}\tableofcontents
	\begin{center}
	\end{center}
	
	\section{Introduction}
Throughout the paper, we denote by $\C ,\,\Z,\,\mathbb{Z}_{+},\N$ the sets of complex numbers, integers, nonnegative integers and positive integers, respectively. All algebras and vector spaces considered in the paper are over $\C$.

As a generalization of derivation, $\check{\text{S}}$emrl introduced the notion of 2-local derivation in \cite{Se}. Therein, he showed that every 2-local derivation on $B(H)$, the algebra  of all bounded linear operators on the infinite-dimensional separable Hilbert space $H$, is a derivation. The idea was then exploited and generalized to consider 2-local derivations on different classes of associative algebras, such as von Neumann algebras and matrix algebras over commutative regular algebras \cite{AK,AKA}. The concept of 2-local derivation is indeed important and interesting for an algebra. The main problem in this subject is to determine all 2-local derivations and examine whether they are necessarily (global) derivations.  For the Lie (super) algebra case, in \cite{AKR}, it was shown that each 2-local derivation on a finite-dimensional semisimple Lie algebra over an algebraically closed field of characteristic zero is a derivation and each finite-dimensional nilpotent Lie algebra with dimension larger than two admits a 2-local derivation which is not a derivation. The authors in \cite{WCN} proved that all 2-local superderivations on basic classical Lie superalgebras except $A(n,n)$ over an algebraically closed field of characteristic zero are derivations. In recent years, many scholars have paid much attention to the study of 2-local (super)derivations of some other Lie (super) algebras, see \cite{A,AKY,CH,DGL,SYZ,TT,TXW,T,YZ,ZCZ}. There is no uniform method to determine all 2-local (super) derivations on Lie (super) algebras.

Block-type Lie algebras  were first introduced by Block \cite{B} in 1958. As is well-known, the Lie algebras of this type have close relations with various well-known Lie algebras, such as the Virasoro algebra \cite{M}, the Virasoro-like algebra \cite{ZZ}, (generalized) Cartan type Lie algebras \cite{X}, W-infinity algebras \cite{KR} and have some applications in the integrable systems \cite{LHS,LHS2}.  Due to these reasons, the structure theory and representation theory of Block-type Lie algebras have been intensively studied by many authors, see \cite{DZ,OZ,S,SXX,SY}. 
However,  because of their inherent complexity, the structure theory of Block-type Lie algebras, especially their local properties, has so far received insufficient attention in the literature.
The aim of this paper is to study 2-local derivations on the Block-type Lie algebra, denoted by  $\mathcal{B}$,  with  basis  \(\{L_{\alpha, i} \mid \alpha \in \mathbb{Z}, i \in \mathbb{Z}_{+}\}\) and  relations
    \[
	 [L_{\alpha, i}, L_{\beta, j} ]=((\alpha-1)(j+1)-(\beta-1)(i+1)) L_{\alpha+\beta, i+j},\ \ \ \ \alpha, \beta \in \mathbb{Z}, i, j \in \mathbb{Z}_{+}.
	\]
This special Block-type Lie algebra is interesting in the sense that the central extension of \(\mathcal{B}\) is given by 
	\[
	 [L_{\alpha, i}, L_{\beta, j} ]=((\alpha-1)(j+1)-(\beta-1)(i+1)) L_{\alpha+\beta, i+j}+\delta_{\alpha+\beta, 0} \delta_{i, 0} \delta_{j, 0} \frac{\alpha^{3}-\alpha}{6} C,\ \ \ \ \alpha, \beta \in \mathbb{Z}, i, j \in \mathbb{Z}_{+},
	\]
which contains a subalgebra  spanned by \(\{L_{\alpha, 0}, C \mid \alpha \in \mathbb{Z}\}\) that is isomorphic to the  Virasoro algebra. Derivation algebra, automorphism group and quasifinite representations of $\mathcal{B}$ have been considered in \cite{S,XW}. 
%From \cite{XW}, we know that  $\mathcal{B}$ has some outer derivations. So it takes many difficulties to determine its 2-local derivations, which can be seen from the computing complexity in this paper.
	
The paper is organized as follows. In Section 2, we recall some fundamental definitions and basic known results that we need in the following. In Section 3, we prove that every 2-local derivation on the Block-type Lie algebra  $\mathcal{B}$ is automatically a derivation.
%Throughout the paper, we denote by $\Z ,\,\Z^*,\,\Z_+,\,\N,\,\C$ the sets of integers, nonzero integers, non-negative integers, positive integers and complex numbers, respectively. All algebras considered in the paper are over $\C$.
\section{Preliminaries}
In this section, we give some necessary definitions and preliminary results.
	
A {\bf derivation} on a Lie algebra $\mathcal{L}$  is a linear map $D:\mathcal{L}\rightarrow\mathcal{L}$ such that
	$$D([x, y])=[D(x), y]+[x, D(y)],\,\,\forall\,x,y\in\mathcal{L}.$$
The set of all derivations of $\mathcal{L}$ is denoted by $\mathrm{Der}(\mathcal{L})$. For each element $a\in\mathcal{L}$, the map 
	$$\mathrm{ad} (a):\mathcal{L}\rightarrow\mathcal{L},\,\,\mathrm{ad} (a) (x)=[a, x], \,\,\forall\,x\in\mathcal{L}$$ is a derivation of $\mathcal{L}$ and derivations of this form are called 
{\bf inner derivations}. The set of all inner derivations, denoted by $\mathrm{Inn}(\mathcal{L})$, is an ideal of $\mathrm{Der}(\mathcal{L})$.
	
A map $\Delta: \mathcal{L}\rightarrow\mathcal{L}$ (not necessarily linear) is called  a {\bf 2-local derivation} if for any $x,y\in\mathcal{L}$, there exists a derivation $D_{x,y}\in\mathrm{Der}(\mathcal{L})$ (depending on $x, y$) such that $\Delta(x)=D_{x,y}(x)$ and $\Delta(y)=D_{x,y}(y)$.
In particular, for any $x\in\mathcal{L}$ and $k\in\C$, there exists $D_{kx,x}\in\mathrm{Der}(\mathcal{L})$ such that
$$\Delta(kx)=D_{kx,x}(kx)=kD_{kx,x}(x)=k\Delta(x).$$
	
We now recall and establish several auxiliary results.
	\begin{lemm}\label{prop1}(cf. \cite{XW})
		$\mathrm{Der}\mathcal{B}=\mathrm{ad}\mathcal{B}\oplus\C d$, where
		$d$ is an outer derivation defined by $d(L_{\b, j})=\b L_{\b, j}$ for all $ \b\in\Z,j\in\Z_+$.
	\end{lemm}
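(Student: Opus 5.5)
The plan is the standard graded-derivation technique, using the semisimple element $L_{0,0}$. From the defining relation,
$$[L_{0,0},L_{\b,j}]=-(\b+j)L_{\b,j},$$
so $\mathrm{ad}\,L_{0,0}$ is diagonalizable. It gives a $\Z$-grading $\mathcal{B}=\bigoplus_{n\in\Z}\mathcal{B}_n$, where $\mathcal{B}_n=\mathrm{span}\{L_{\b,j}\mid \b+j=n\}$.

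\emph{Step 1: decompose $D$ and remove the nonzero degrees.} Given $D\in\mathrm{Der}\,\mathcal{B}$, define for each $n$ the map $D_n$ by letting $D_n(x)$ be the $\mathcal{B}_{m+n}$-component of $D(x)$ for $x\in\mathcal{B}_m$. Each $D_n$ is again a derivation, and $D=\sum_n D_n$ holds pointwise: for each $x$ only finitely many $D_n(x)$ are nonzero. For $n\neq 0$, applying $D_n$ to $[L_{0,0},x]=-m x$ with $x\in\mathcal{B}_m$ gives
$$D_n=\mathrm{ad}\bigl(\tfrac1n D_n(L_{0,0})\bigr),$$
so $D_n$ is inner. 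Since $D(L_{0,0})$ is a finite sum, only finitely many $D_n(L_{0,0})$ are nonzero, hence only finitely many $D_n$ with $n\neq0$ are nonzero. Subtracting the finite inner derivation $\sum_{n\ne0}D_n$, we may therefore assume $D=D_0$. In that case $D$ commutes with $\mathrm{ad}\,L_{0,0}$ and preserves each $\mathcal{B}_n$.

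\emph{Step 2: make $D$ diagonal.} Next I would show that a degree-zero derivation is diagonal in the basis, after a further inner correction by an element of $\mathcal{B}_0$. Note that $\mathcal{B}_0$ is spanned by $L_{-j,j}$, $j\ge0$, and is infinite-dimensional, so the correction must be controlled. I would exploit the second grading by $i$, for which the subalgebra $\mathrm{span}\{L_{\alpha,0}\}$ (the centreless Virasoro part) acts with $j$-degree $0$.

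Concretely, I would write
$$D(L_{\pm1,0})=\sum_j a^{\pm}_j L_{\pm1-j,\,j},\qquad D(L_{\pm2,0})=\sum_j b^{\pm}_j L_{\pm2-j,\,j},$$
and use the relations among $L_{0,0},L_{\pm1,0},L_{\pm2,0}$. By subtracting $\mathrm{ad}$ of a suitable finite combination of the $L_{-j,j}$, the goal is to kill the $j>0$ components, so that $D$ preserves $\mathrm{span}\{L_{\alpha,0}\}$ and acts diagonally there. I would then use generation: $\mathcal{B}$ is generated by $\{L_{\alpha,0}\}$ together with a few elements such as $L_{0,1}$ and $L_{-1,1}$. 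Computing $D$ on these generators and propagating via brackets like $[L_{\alpha,0},L_{\b,j}]=((\alpha-1)(j+1)-(\b-1))L_{\alpha+\b,j}$ should give $D(L_{\b,j})=c_{\b,j}L_{\b,j}$ for all $\b,j$.

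\emph{Step 3: solve for the scalars.} Once $D$ is diagonal, the derivation identity gives
$$c_{\alpha,i}+c_{\b,j}=c_{\alpha+\b,i+j}$$
whenever $(\alpha-1)(j+1)\ne(\b-1)(i+1)$. Using enough nonvanishing brackets, this forces $c_{\b,j}=a\b+bj$ for constants $a,b$; in particular, any constant part vanishes. Now $L_{\b,j}\mapsto jL_{\b,j}$ equals $-\mathrm{ad}\,L_{0,0}-d$, so
$$D=a\,d+b(-\mathrm{ad}\,L_{0,0}-d)\in\mathrm{ad}\mathcal{B}+\C d.$$

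\emph{Step 4: directness of the sum.} To see the sum is direct, note that $d(L_{\b,j})=\b L_{\b,j}$. If $d=\mathrm{ad}(x)$, then $x$ would have to lie in $\mathcal{B}_0$, hence be a finite combination of $L_{-j,j}$. Comparing eigenvalues on all $L_{\b,j}$ then shows that no such $x$ exists: $\mathrm{ad}\,L_{0,0}$ yields $-(\b+j)$, not $\b$, and the $L_{-j,j}$ with $j>0$ shift the second index.

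The main obstacle is Step 2. $\mathcal{B}_0$ is infinite-dimensional, so one must show that the needed correction is a finite combination and that the $j$-shifting components genuinely vanish. This requires careful coefficient comparisons on the generators.
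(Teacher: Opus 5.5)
The paper gives no proof of this lemma; it quotes it from [XW], so your outline has to stand on its own. It does not yet: there is a genuine gap at Step 2. Steps 1, 3 and 4 are fine. For $x\in\mathcal{B}_m$ one gets $nD_n(x)=[D_n(L_{0,0}),x]$; a diagonal derivation does have $c_{\beta,j}=a\beta+bj$; and $-\mathrm{ad}L_{0,0}-d$ acts as $j$ on $L_{\beta,j}$. Step 2, however, carries essentially all the content of the lemma, and it is only announced (``the goal is to kill'', ``should give''). Two ingredients are missing:
\begin{itemize}
\item The trick of Step 1 cannot be repeated for the second grading, because that grading is induced by the outer derivation $-\mathrm{ad}L_{0,0}-d$. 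So you must actually prove that the index-raising parts of $D$ on $W=\mathrm{span}\{L_{\alpha,0}\}$ are inner, and that a finite correction suffices.
\item Even once $D|_W$ is controlled, generation alone does not give $D(L_{0,1})\in\C L_{0,1}$. A priori $D(L_{0,1})$ is an arbitrary finite combination of the $L_{1-j,j}$, $j\ge 0$. You need a reason why $D$ cannot move the layer $\mathcal{B}^{(1)}=\mathrm{span}\{L_{\beta,1}\mid\beta\in\Z\}$ into other layers $\mathcal{B}^{(j)}$.
\end{itemize}

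Both can be supplied. Refine the degree-zero $D$ by the $\Z\times\Z_+$-bigrading: $D=\sum_{k\in\Z}D^{[k]}$ with $D^{[k]}(L_{\alpha,i})\in\C L_{\alpha-k,i+k}$. Each $D^{[k]}$ is a derivation, and those with $k<0$ vanish on $W$. For $k\ge0$ write $D^{[k]}(L_{\alpha,0})=f_k(\alpha)L_{\alpha-k,k}$. The derivation identity on $[L_{\alpha,0},L_{\beta,0}]$ becomes
\[
(\alpha-\beta)f_k(\alpha+\beta)=(\alpha-(k+1)\beta)f_k(\alpha)+((k+1)\alpha-\beta)f_k(\beta).
\]
The choices $\beta=0$, $\beta=-\alpha$, $(\alpha,\beta)=(-1,2)$ and $\beta=1$ force $f_k(\alpha)=f_k(1)\alpha$. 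Hence $D^{[k]}|_W=\mathrm{ad}(c_kL_{-k,k})|_W$ with $c_k=-f_k(1)/(k+1)$. Since $D(L_{1,0})=-\sum_k(k+1)c_kL_{1-k,k}$ is a finite sum, almost all $c_k$ vanish. Subtracting $\mathrm{ad}(\sum_k c_kL_{-k,k})$ therefore yields $D|_W=0$, so $D$ commutes with $\mathrm{ad}\,W$.

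Now use a vector with a nontrivial annihilator in $W$. Since $[L_{1,0},L_{1,1}]=0$ and $[L_{1,0},L_{2-j,j}]=(j-1)L_{3-j,j}$, you get $D(L_{1,1})=eL_{1,1}$ for some $e\in\C$. Your $L_{0,1}$ and $L_{-1,1}$ are killed by no $L_{\alpha,0}$ with $\alpha\neq0$, so they do not allow this. Then the brackets
\[
[L_{\alpha,0},L_{1,1}]=2(\alpha-1)L_{\alpha+1,1},\qquad [L_{2,0},L_{0,1}]=3L_{2,1},\qquad [L_{\alpha,1},L_{\gamma-\alpha,j}]=((j+3)\alpha-j-2\gamma+1)L_{\gamma,j+1}
\]
give $D(L_{\beta,j})=je\,L_{\beta,j}$ by induction on $j$, i.e.\ $D=-e(\mathrm{ad}L_{0,0}+d)$. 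This also makes your Step 3 unnecessary.
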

	As a direct consequence of Lemma \ref{prop1}, we have the following.
	\begin{lemm}\label{prop2}
	Let $\Delta$ be a 2-local derivation on $\mathcal{B}$. Then for every $x,y\in \mathcal{B}$, there exists a derivation  $D_{x,y}$ of $\mathcal{B}$ such that  $\Delta(x)=D_{x,y}(x),\Delta(y)=D_{x,y}(y)$ and  it can be written as
	$$D_{x, y}=\mathrm{ad}(\sum_{(\alpha,i)\in \mathbb{Z}\times\mathbb{Z}_{+}} a_{\alpha, i}(x, y) L_{\alpha, i})+\lambda(x, y) d,$$
	where $a_{\alpha, i}(x, y),\lambda(x, y)\in\C$  and $d$ is given by Lemma \ref{prop1}.

	\end{lemm}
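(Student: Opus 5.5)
This is essentially immediate from the definition of a 2-local derivation combined with Lemma \ref{prop1}. Fix $x,y\in\mathcal{B}$. By the definition of a 2-local derivation, there is some $D_{x,y}\in\mathrm{Der}(\mathcal{B})$ with $\Delta(x)=D_{x,y}(x)$ and $\Delta(y)=D_{x,y}(y)$. By Lemma \ref{prop1}, $\mathrm{Der}\,\mathcal{B}=\mathrm{ad}\,\mathcal{B}\oplus\C d$, so we can write
$$D_{x,y}=\mathrm{ad}(a)+\lambda d$$
for a unique $a\in\mathcal{B}$ and a unique $\lambda\in\C$. These depend on the pair $(x,y)$ through the choice of $D_{x,y}$.

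The remaining step is to expand $a$ in the basis $\{L_{\alpha,i}\mid \alpha\in\Z,\ i\in\Z_+\}$ of $\mathcal{B}$. Since $\mathcal{B}$ is the linear span of this basis, $a$ is a finite linear combination of basis vectors. So $a=\sum_{(\alpha,i)} a_{\alpha,i}(x,y)L_{\alpha,i}$, where only finitely many coefficients $a_{\alpha,i}(x,y)\in\C$ are nonzero. Setting $\lambda(x,y):=\lambda$ then gives exactly the displayed form
$$D_{x,y}=\mathrm{ad}\Bigl(\sum_{(\alpha,i)\in\Z\times\Z_+} a_{\alpha,i}(x,y)L_{\alpha,i}\Bigr)+\lambda(x,y)\,d.$$

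There is no real obstacle here. The only point to be careful about is that the sum is understood as finitely supported, because $\mathcal{B}$ consists of finite combinations and $\mathrm{ad}$ is taken of an actual element of $\mathcal{B}$. All the substantive content, namely the classification of derivations, is supplied by Lemma \ref{prop1}, which we assume.
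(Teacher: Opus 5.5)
Your proposal is correct and matches the paper, which records this lemma as a direct consequence of Lemma~\ref{prop1}. The argument is the same as yours: apply the definition of a 2-local derivation, decompose via $\mathrm{Der}\,\mathcal{B}=\mathrm{ad}\,\mathcal{B}\oplus\C d$, and expand the inner part as a finitely supported combination of the $L_{\alpha,i}$. One harmless overstatement: the direct sum only makes $\mathrm{ad}(a)$ and $\lambda$ unique, and uniqueness of $a$ itself needs $\mathcal{B}$ to have trivial center (true here, e.g.\ by bracketing with $L_{0,0}$ and $L_{1,0}$), but the lemma does not require it.
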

	\section{ 2-local derivations on $\mathcal{B}$}
In this section, we will determine all 2-local derivations on the Block-type Lie algebra $\mathcal{B}$.
	\begin{lemm}\label{prop3}
		Let $\Delta$ be a 2-local derivation on $\mathcal{B}$, and let $(\b,j)\in\Z\times \Z_+$ such that  $\Delta(L_{\b,j})=0$. Then for any $y\in\mathcal{B}$, we have 
\begin{align}
\label{e1}D_{L_{\beta, j}, y}&=\mathrm{ad}(\frac{\beta}{\beta+j} \lambda (L_{\beta, j}, y) L_{0,0}+\sum_{i \in \Z_{+}} a_{k_{i}, i} (L_{\beta, j}, y) L_{k_{i}, i})+\lambda (L_{\beta, j}, y) d\,\,\,{\rm for}\,\,\,j\neq -\b,\\
\label{e2}D_{L_{\beta,-\beta}, y}&=\mathrm{ad}(\sum_{i \in \Z_{+}} a_{-i, i}(L_{\beta,-\beta}, y) L_{-i, i})+\delta_{\beta, 0} \lambda(L_{0,0}, y)d,
			\end{align}
where $k_{i}=\frac{\beta i+\b-i+j}{j+1} \in \mathbb{Z}, a_{k_{i},i} (L_{\beta, j}, y),a_{-i, i}(L_{\beta,-\beta}, y),\lambda (L_{\beta, j}, y),\lambda(L_{0,0}, y) \in \mathbb{C}$ and $\delta_{\beta, 0}$ is the Kronecker delta.
	\end{lemm}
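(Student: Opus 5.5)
Write $D_{L_{\beta,j},y}=\mathrm{ad}(\sum a_{\alpha,i}L_{\alpha,i})+\lambda d$ as in Lemma \ref{prop2}, with $a_{\alpha,i}=a_{\alpha,i}(L_{\beta,j},y)$ and $\lambda=\lambda(L_{\beta,j},y)$. The plan is to evaluate this derivation on $L_{\beta,j}$, use $\Delta(L_{\beta,j})=0$, and compare coefficients in the basis. We compute
$$0=\Delta(L_{\beta,j})=D_{L_{\beta,j},y}(L_{\beta,j})=\sum_{(\alpha,i)} a_{\alpha,i}\big((\alpha-1)(j+1)-(\beta-1)(i+1)\big)L_{\alpha+\beta,i+j}+\lambda\beta L_{\beta,j}.$$
The map $(\alpha,i)\mapsto(\alpha+\beta,i+j)$ is injective, so each basis vector $L_{\alpha+\beta,i+j}$ receives exactly one term from the sum. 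The only exception is $L_{\beta,j}$, which receives the $(0,0)$ term and the $\lambda d$ term. Comparing coefficients therefore gives two conditions:
\begin{align*}
a_{\alpha,i}\big((\alpha-1)(j+1)-(\beta-1)(i+1)\big)&=0\quad\text{for }(\alpha,i)\neq(0,0),\\
a_{0,0}\big(-(j+1)-(\beta-1)\big)+\lambda\beta&=0,\quad\text{i.e. } a_{0,0}(\beta+j)=\lambda\beta.
\end{align*}

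Next, I solve the first condition. If $a_{\alpha,i}\neq0$, then $(\alpha-1)(j+1)=(\beta-1)(i+1)$, so $\alpha=1+\frac{(\beta-1)(i+1)}{j+1}=\frac{\beta i+\beta-i+j}{j+1}=k_i$. Hence only indices with $\alpha=k_i\in\Z$ survive; for $i$ with $k_i\notin\Z$ the coefficient must vanish, which is consistent with the convention in the statement. The index $i=0$ gives $k_0=\frac{\beta+j}{j+1}$.

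The main point needing care is separating the $(0,0)$ coefficient from the $k_i$ family, and here the two cases of the statement come in. If $j\neq-\beta$, the second condition gives $a_{0,0}=\frac{\beta}{\beta+j}\lambda$. Moreover $(0,0)$ is not of the form $(k_i,i)$, since $k_0=0$ would force $\beta+j=0$. Collecting terms gives \eqref{e1}.

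If $j=-\beta$ (so $\beta\le0$), then $k_i=\frac{\beta i+\beta-i-\beta}{1-\beta}=-i$, and in particular $k_0=0$. So $a_{0,0}$ belongs to the family $a_{-i,i}$ and is unconstrained, while the second condition becomes $\lambda\beta=0$. When $\beta\neq0$ this forces $\lambda=0$. When $\beta=0$ we have $j=0$, the element is $L_{0,0}$, and $\lambda$ remains free. Together these give the factor $\delta_{\beta,0}\lambda(L_{0,0},y)$ and hence \eqref{e2}. The argument is a bookkeeping exercise; the only delicate step is keeping track of when $k_0=0$.
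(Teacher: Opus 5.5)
Your proof is correct and follows essentially the same route as the paper: you evaluate $D_{L_{\beta,j},y}$ on $L_{\beta,j}$, compare coefficients (noting that the $(0,0)$ term and the $\lambda d$ term both land on $L_{\beta,j}$), and split according to whether $\beta+j=0$. The differences are cosmetic. In the case $j=-\beta$ you specialize the general formula to get $k_i=-i$, whereas the paper recomputes the bracket as $(1-\beta)(\alpha+i)$. You also state explicitly that $k_0\neq 0$ when $j\neq-\beta$, so the $L_{0,0}$ term does not overlap the $k_i$-family; the paper leaves this implicit.
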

	\begin{proof}
		Using Lemma \ref{prop2}, for any $j\in\Z_+$, we can assume that
	    \[
	    D_{L_{\beta, j}, y}=\mathrm{ad}(\sum_{(\alpha,i)\in \mathbb{Z}\times\mathbb{Z}_{+}} a_{\alpha, i}(L_{\beta, j}, y) L_{\alpha, i})+\lambda(L_{\beta, j}, y) d
	    \]
with $a_{\alpha, i}(L_{\beta, j}, y),\lambda(L_{\beta, j}, y)\in\mathbb{C}$. First consider the case $j\neq-\b$. In this case, we have
\begin{align*}
			\Delta(L_{\beta, j}) & =D_{L_{\beta, j}, y}(L_{\beta, j}) \\
			& =\sum_{(\alpha,i)\in \mathbb{Z}\times\mathbb{Z}_{+}} [a_{\alpha, i} (L_{\beta, j}, y) L_{\alpha, i}, L_{\beta, j}]+\lambda (L_{\beta, j}, y) d (L_{\beta, j}) \\
			& =\sum_{(\alpha,i)\in \mathbb{Z}\times\mathbb{Z}_{+}}((\alpha-1)(j+1)-(\beta-1)(i+1)) a_{\alpha, i}(L_{\beta, j}, y) L_{\alpha+\beta, i+j}+\beta \lambda(L_{\beta, j}, y) L_{\beta, j} \\
			& =0.
		\end{align*}
For $(\alpha,i)\in \mathbb{Z}\times\mathbb{Z}_{+}\setminus\{(0,0)\}$, by comparing the coefficients of $L_{\alpha+\beta, i+j}$  and   $L_{\beta, j}$ in the above equation, we respectively get
\[((\alpha-1)(j+1)-(\beta-1)(i+1)) a_{\alpha, i} (L_{\beta, j}, y)=0\]
and
		\[-(\beta+j) a_{0,0} (L_{\beta, j}, y)+\beta \lambda (L_{\beta, j}, y)=0.\]	
The first equation implies that $a_{\alpha, i}(L_{\beta, j}, y)=0$ provided that $\alpha \neq \frac{\beta i+\b-i+j}{j+1}$. From the second equation, we obtain $a_{0,0} (L_{\beta, j}, y )=\frac{\beta}{\beta+j} \lambda(L_{\beta, j}, y)$. Thus, \eqref{e1} holds. In the remaining case $j=-\b$,  a straightforward computation yields that
	\begin{align*}
		\Delta (L_{\beta, -\b} ) & =D_{L_{\beta, -\b}, y} (L_{\beta, -\b}) \\
		& = \sum_{(\alpha,i)\in \mathbb{Z}\times\mathbb{Z}_{+}} [a_{\alpha, i} (L_{\beta,-\beta}, y) L_{\alpha, i}, L_{\beta,-\beta} ]+\lambda (L_{\beta,-\beta}, y) d (L_{\beta,-\beta}) \\
		& =\sum_{(\alpha,i)\in \mathbb{Z}\times\mathbb{Z}_{+}}(1-\beta)(\alpha+i) a_{\alpha, i} (L_{\beta,-\beta}, y) L_{\alpha+\beta, i-\beta}+\beta \lambda (L_{\beta,-\beta}, y) L_{\beta,-\beta}\\
		& =0.
		\end{align*}
For $(\alpha,i)\in \mathbb{Z}\times\mathbb{Z}_{+}\setminus\{(0,0)\}$, it follows from comparing the coefficients of $L_{\beta,-\beta}$ and $L_{\alpha+\beta, i-\beta}$ in the above equation that 
	\[
	\begin{gathered}
		\beta \lambda (L_{\beta,-\beta}, y)=0, \\
		(1-\beta)(\alpha+i) a_{\alpha, i} (L_{\beta,-\beta}, y)=0,
	\end{gathered}
	\]
which in turn imply $\lambda(L_{\beta,-\beta}, y)=0$ for $\beta \neq 0$ and $a_{\alpha, i} (L_{\beta,-\beta}, y )=0$ for $\alpha \neq-i$. This proves \eqref{e2}, and with it the lemma.
\end{proof}
	\begin{lemm}\label{prop4}
		Let $\Delta$ be a 2-local derivation on $\mathcal{B}$ such that $\Delta(L_{0,0})=\Delta(L_{1,0})=0$. Then for any $(\beta, j)\in \mathbb{Z}\times\mathbb{Z}_{+}$, we have
$\Delta (L_{\beta, j} )=j \xi_{L_{\beta, j}} L_{\beta, j}$, where $\xi_{L_{\beta, j}}=-\lambda(L_{1,0}, L_{\beta, j})$.
	\end{lemm}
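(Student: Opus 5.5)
The plan is to compute $\Delta(L_{\beta,j})$ in two ways, using the two derivations supplied by Lemma \ref{prop3} for the pairs $(L_{1,0},L_{\beta,j})$ and $(L_{0,0},L_{\beta,j})$. Comparing the two results on the $\alpha$-grading (the first index) should force all unwanted terms to vanish.

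\emph{The $L_{1,0}$ side.} Apply \eqref{e1} with $(\beta,j)=(1,0)$; this is allowed since $0\neq -1$. Here $k_i=\frac{i+1-i+0}{1}=1$ for every $i$ and $\frac{\beta}{\beta+j}=1$. Writing $\lambda=\lambda(L_{1,0},L_{\beta,j})$ and $b_i=a_{1,i}(L_{1,0},L_{\beta,j})$, Lemma \ref{prop3} gives
\[
D_{L_{1,0},L_{\beta,j}}=\mathrm{ad}\Big(\lambda L_{0,0}+\sum_{i\in\Z_+} b_iL_{1,i}\Big)+\lambda d .
\]
Evaluate this on $L_{\beta,j}$. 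From $[L_{0,0},L_{\beta,j}]=-(\beta+j)L_{\beta,j}$ and $d(L_{\beta,j})=\beta L_{\beta,j}$, the $L_{0,0}$ and $d$ parts combine to $-j\lambda L_{\beta,j}$. From $[L_{1,i},L_{\beta,j}]=-(\beta-1)(i+1)L_{\beta+1,i+j}$ we get
\[
\Delta(L_{\beta,j})=-j\lambda L_{\beta,j}-(\beta-1)\sum_{i}(i+1)b_iL_{\beta+1,i+j}.
\]

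\emph{The $L_{0,0}$ side.} Apply \eqref{e2} with $\beta=0$:
\[
D_{L_{0,0},L_{\beta,j}}=\mathrm{ad}\Big(\sum_i a_{-i,i}L_{-i,i}\Big)+\mu d,\qquad \mu=\lambda(L_{0,0},L_{\beta,j}).
\]
Applied to $L_{\beta,j}$, this gives a combination of the vectors $L_{\beta-i,i+j}$ with $i\ge 0$. All of these have first index $\le\beta$.

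\emph{Comparison.} The second expression has no component of first index $\beta+1$. Hence in the first expression $(\beta-1)(i+1)b_i=0$ for all $i$. For $\beta=1$ these terms vanish automatically, since the coefficient $\beta-1$ is zero. In every case we obtain $\Delta(L_{\beta,j})=-j\lambda(L_{1,0},L_{\beta,j})L_{\beta,j}=j\xi_{L_{\beta,j}}L_{\beta,j}$, as claimed.

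There is no real obstacle here, only bookkeeping. The points to check are that all the sums are finite, since elements of $\mathcal{B}$ are finite combinations, and that the basis vectors $L_{\beta+1,i+j}$ are distinct from every $L_{\beta-i',i'+j}$ in the second expression, which holds because their first indices differ.
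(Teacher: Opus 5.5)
Your proposal is correct and follows the paper's route. Like the paper, you write $D_{L_{1,0},L_{\beta,j}}$ and $D_{L_{0,0},L_{\beta,j}}$ via Lemma~\ref{prop3}, evaluate both on $L_{\beta,j}$, and compare coefficients of $L_{\beta+1,i+j}$ to force $(\beta-1)(i+1)a_{1,i}(L_{1,0},L_{\beta,j})=0$. Your remark that every term on the $L_{0,0}$ side has first index at most $\beta$ spells out a justification the paper leaves implicit.
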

	\begin{proof} According to \(\Delta (L_{0,0} )=\Delta (L_{1,0} )=0\) and Lemma \ref{prop3}, for  any $(\beta, j)\in \mathbb{Z}\times\mathbb{Z}_{+}$,  we have
\begin{align*}
			D_{L_{1,0}, L_{\beta, j}}&=\mathrm{ad} (\lambda (L_{1,0}, L_{\beta, j}) L_{0,0}+\sum_{i \in \Z_{+}} a_{1, i} (L_{1,0}, y) L_{1, i})+\lambda(L_{1,0}, L_{\beta, j}) d,\\
			D_{L_{0,0}, L_{\beta, j}}&=\mathrm{ad} (\sum_{i \in \Z_{+}} a_{-i, i}(L_{0,0}, L_{\beta, j}) L_{-i, i} )+\lambda (L_{0,0}, L_{\beta, j}) d
		\end{align*}
for some \(\lambda (L_{1,0},L_{\beta, j}), a_{1, i} (L_{1,0},L_{\beta, j}),a_{-i, i} (L_{0,0},L_{\beta, j} ),\lambda (L_{0,0}, L_{\beta, j})\in \mathbb{C}\).
We now calculate explicitly
\begin{align*}
\Delta (L_{\beta, j})= & D_{L_{1,0}, L_{\beta, j}} (L_{\beta, j}) \\
		= & { [\lambda\left(L_{1,0}, L_{\beta, j}\right) L_{0,0}, L_{\beta, j}]+ \sum_{i \in \mathbb{Z}_{+}}[a_{1, i} (L_{1,0}, L_{\beta, j}) L_{1, i}, L_{\beta, j}]+\lambda\left(L_{1,0}, L_{\beta, j}\right) d\left(L_{\beta, j}\right)} \\
%		= & -(\beta+j) \lambda (L_{1,0}, L_{\beta, j} ) L_{\beta, j}+\sum_{i \in \mathbb{Z}_{+}}-(\beta-1)(i+1) a_{1, i} (L_{1,0}, L_{\beta, j} ) L_{1+\beta, i+j} \\
%		& +\beta \lambda (L_{1,0}, L_{\beta, j} ) L_{\beta, j} \\
		= & -\sum_{i \in \mathbb{Z}_{+}}(\beta-1)(i+1) a_{1, i} (L_{1,0}, L_{\beta, j}) L_{1+\beta, i+j}-j \lambda (L_{1,0}, L_{\beta, j}) L_{\beta, j},\\
\Delta (L_{\beta, j})= & D_{L_{0,0}, L_{\beta, j}} (L_{\beta, j} ) \\
		= & { \sum_{i \in \mathbb{Z}_{+}} [a_{-i, i}\left(L_{0,0}, L_{\beta, j}\right) L_{-i, i}, L_{\beta, j} ]+\lambda (L_{0,0}, L_{\beta, j}) d(L_{\beta, j})} \\
		= &-\sum_{i \in \mathbb{Z}_{+}}(i+1)(\beta+j) a_{-i, i} (L_{0,0}, L_{\beta, j} ) L_{-i+\beta, i+j}+\beta \lambda (L_{0,0}, L_{\beta, j}) L_{\beta, j}.
		\end{align*}
Equating the above two expressions for $\Delta (L_{\beta, j})$ and comparing the coefficients of \(L_{1+\beta, i+j}\) on both sides, we immediately get $(\beta-1)(i+1) a_{1, i} (L_{1,0}, L_{\beta, j})=0$ for all $i\in\Z_+$.
%		\begin{align*}
%			& -\sum_{i \in \mathbb{Z}_{+}}(\beta-1)(i+1) a_{1, i}(L_{1,0}, L_{\beta, j}) L_{1+\beta, i+j}-j \lambda (L_{1,0}, L_{\beta, j}) L_{\beta, j}\\
%			& =-\sum_{i \in \mathbb{Z}_{+}}(i+1)(\beta+j) a_{-i, i} (L_{0,0}, L_{\beta, j}) L_{-i+\beta, i+j}+\beta \lambda (L_{0,0}, L_{\beta, j}) L_{\beta, j}.
%		\end{align*}
%Comparing the coefficients of  in the above equation gives 
As a result, \(\Delta\left(L_{\beta, j}\right)=j \xi_{L_{\beta, j}} L_{\beta, j}\) with \(\xi_{L_{\beta, j}}=- \lambda(L_{1,0}, L_{\beta, j}) \in \mathbb{C}\), as desired.
	\end{proof}
	\begin{lemm}\label{prop5}
		Let $\Delta$ be a 2-local derivation on $\mathcal{B}$ such that \(\Delta (L_{0,0})=\Delta (L_{1,0})=0\). Then for any \(x=\sum_{(\gamma,k) \in \mathbb{Z}\times \mathbb{Z}_{+}} \mu_{\gamma, k} L_{\gamma, k}\), we have $\Delta(x)=\xi_{x} \sum_{(\gamma,k) \in \mathbb{Z}\times \mathbb{Z}_{+}} k\mu_{\gamma, k} L_{\gamma, k}$, where  $\xi_{x}=-\lambda (L_{n, 0}, x) \in \mathbb{C}$.
	\end{lemm}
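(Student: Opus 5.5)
Since $x$ is a finite linear combination, I would let $S=\{\gamma\mid \mu_{\gamma,k}\neq 0\ \text{for some } k\}$ and put $m=\min S$, $M=\max S$. The idea is to choose $n\in\N$ large compared with $M-m$, say $n>M-m+1$ (so in particular $n\neq 0$). I would then compute $\Delta(x)$ in two ways: once through $D_{L_{n,0},x}$ and once through $D_{L_{1,0},x}$. Comparing the supports with respect to the first index should kill every term except the one coming from $L_{0,0}$ and $d$.

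\emph{Step 1.} By Lemma \ref{prop4} with $j=0$, we have $\Delta(L_{n,0})=0$ for every $n\in\Z$. So Lemma \ref{prop3} applies with $(\beta,j)=(n,0)$. Since $j=0\neq -n$, formula \eqref{e1} gives
\[
D_{L_{n,0},x}=\mathrm{ad}\Big(\lambda(L_{n,0},x)L_{0,0}+\sum_{i\in\Z_+}a_{k_i,i}(L_{n,0},x)L_{k_i,i}\Big)+\lambda(L_{n,0},x)d,
\qquad k_i=(n-1)(i+1)+1 .
\]
Using $[L_{0,0},L_{\gamma,k}]=-(\gamma+k)L_{\gamma,k}$ and $d(L_{\gamma,k})=\gamma L_{\gamma,k}$, the part coming from $L_{0,0}$ and $d$ contributes $-\lambda(L_{n,0},x)\sum k\mu_{\gamma,k}L_{\gamma,k}$. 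This term is supported in first indices lying in $S$. Each term $[L_{k_i,i},L_{\gamma,k}]$ is a multiple of $L_{k_i+\gamma,i+k}$. Its first index is at least $n+m>M+1$, because $k_i\ge n$ for $n\ge1$.

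\emph{Step 2.} By Lemma \ref{prop3} with $(\beta,j)=(1,0)$ (this is exactly the form used in the proof of Lemma \ref{prop4}), we have
\[
\Delta(x)=D_{L_{1,0},x}(x)=-\lambda(L_{1,0},x)\sum k\mu_{\gamma,k}L_{\gamma,k}+\sum_{i}a_{1,i}(L_{1,0},x)[L_{1,i},x].
\]
All terms here have first index at most $M+1$.

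\emph{Step 3.} I would equate the two expressions for $\Delta(x)$. The second expression has no components with first index $>M+1$. Therefore the component of $\sum_i a_{k_i,i}(L_{n,0},x)[L_{k_i,i},x]$ in each degree $\ge n+m$ must vanish. That whole sum lives in such degrees, so it is zero. Hence
\[
\Delta(x)=-\lambda(L_{n,0},x)\sum k\mu_{\gamma,k}L_{\gamma,k}=\xi_x\sum k\mu_{\gamma,k}L_{\gamma,k},
\]
which is the claim with $n$ chosen as above.

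The only delicate point is Step 3. One must make sure that the high-degree terms cannot cancel against the $L_{0,0}$/$d$ contribution or against the $L_{1,i}$ terms. This is exactly why $n$ has to be taken large enough, depending on $x$, for the degree windows $[m,M+1]$ and $[n+m,\infty)$ to be disjoint. It is also why $\xi_x$ is defined via such an $L_{n,0}$.
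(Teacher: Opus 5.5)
Your proof is correct and is essentially the paper's argument: the paper also writes $\Delta(x)=D_{L_{n,0},x}(x)$ via Lemma \ref{prop3}, and it kills the terms coming from $L_{k_i,i}$ with $k_i=ni+n-i$ by comparing first indices for $n$ large, except that it compares with $D_{L_{0,0},x}(x)$ (all first indices $\le M$) rather than with $D_{L_{1,0},x}(x)$ (first indices $\le M+1$). Your explicit choice $n>M-m+1$ makes the paper's ``$n$ sufficiently large'' precise, and the case $x=0$, which your definition of $m,M$ skips, is trivial since $\Delta(0)=0$.
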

	\begin{proof}
Combining \(\Delta (L_{0,0})=\Delta (L_{1,0})=0\) with Lemma \ref{prop4}, we see that \(\Delta\left(L_{n, 0}\right)=0\) for any \(n \in \mathbb{Z}\). This along with  Lemma \ref{prop3} gives
	\begin{align*}
	D_{L_{0,0}, x}&=\mathrm{ad}(\sum_{i \in \Z_{+}} a_{-i, i}(L_{0,0}, x) L_{-i, i})+ \lambda(L_{0,0}, x)d,\\
D_{L_{n, 0}, x}&=\mathrm{ad} (\lambda (L_{n, 0}, x) L_{0,0}+\sum_{i \in \Z_{+}} a_{ni+n-i, i}(L_{n, 0}, x) L_{ni+n-i, i})+\lambda(L_{n, 0}, x)d \,\,\,  {\rm for}\,\,\,  n\neq0,
		\end{align*}
where \(\lambda (L_{n, 0}, x ), a_{ni+n-i, i} (L_{n, 0}, x) \in \mathbb{C}\). Substituting these into $\Delta(x)=D_{L_{0,0}, x}(x)=D_{L_{n, 0}, x}(x)$ for $n\neq0$, one obtains
	\begin{align*}
\Delta(x)&=-\sum_{i\in \mathbb{Z}_{+}}\sum_{(\gamma,k) \in \mathbb{Z}\times\mathbb{Z}_{+}}(i+1)(k+\gamma) a_{-i, i}(L_{0,0}, x) \mu_{\gamma, k} L_{-i+\gamma, i+k}+\lambda (L_{0,0}, x) \sum_{(\gamma,k) \in \mathbb{Z}\times\mathbb{Z}_{+}} \gamma\mu_{\gamma, k} L_{\gamma, k}\\
&=\sum_{i \in \mathbb{Z}_{+}} \sum_{(\gamma,k) \in \mathbb{Z}\times\mathbb{Z}_{+}}(i+1)(nk+n-k-\gamma) a_{ni+n-i, i}(L_{n, 0}, x) \mu_{\gamma, k} L_{ni+n-i+\gamma, i+k}\\
		&\quad -\lambda(L_{n, 0}, x)\sum_{(\gamma,k) \in \mathbb{Z}\times\mathbb{Z}_{+}}k\mu_{\gamma, k} L_{\gamma, k}.
\end{align*}
 By taking  $n$ sufficiently large  in the second equality,  we  get \(\Delta(x)=\xi_{x} \sum_{(\gamma,k) \in \mathbb{Z}\times\mathbb{Z}_{+}} k\mu_{\gamma, k} L_{\gamma, k}\) with \(\xi_{x}=-\lambda (L_{n, 0}, x ) \). This completes the proof.
	\end{proof}
%	\begin{lemm}\label{prop6}
%		Let $\Delta$ be a 2-local derivation on $\mathcal{B}$ such that \(\Delta (L_{0,0})=\Delta (L_{1,0} )=\Delta (L_{-1,1} )=0\). Then \(\Delta (L_{p, 0}+L_{-2 p, 2 p} )=0\) for any  \(p \in \mathbb{N}\).
%	\end{lemm}
%	\begin{proof}
	\begin{lemm}\label{prop7}
		Let $\Delta$ be a 2-local derivation on $\mathcal{B}$ such that \(\Delta (L_{0,0})=\Delta (L_{1,0} )=\Delta (L_{-1,1})=0\). Then for any \(p \in \mathbb{N}\) and \(y \in \mathcal{B}\), we have
	\[
	D_{L_{p, 0}+L_{-2 p, 2 p}, y}=\mathrm{ad} (\sum_{\substack{\alpha+i \in p\mathbb{Z}_{+} \\(\alpha, i) \neq(0,0)}} a_{\alpha, i} (L_{p, 0}+L_{-2 p, 2 p}, y) L_{\alpha, i})
	\]
for some  $a_{\alpha, i} (L_{p, 0}+L_{-2 p, 2 p}, y) \in\C$.
	\end{lemm}
	\begin{proof}
For any  \(p \in \mathbb{N}\), in view of  \(\Delta (L_{0,0} )=\Delta (L_{1,0})=0\) and Lemma \ref{prop4}, we see that
		\begin{align}\label{8.1}
		\Delta (L_{p, 0}+L_{-2 p, 2 p} )=2 p \xi_{L_{p, 0}+L_{-2 p, 2 p}} L_{-2 p, 2p}
\end{align}
for some $\xi_{L_{p, 0}+L_{-2 p, 2 p}} \in\C$. Meanwhile, using  \(\Delta (L_{-1,1} )=0\) and Lemma \ref{prop3}, we can assume that $D_{L_{-1,1}, y}=\mathrm{ad} (\sum_{i \in \Z_{+}} a_{-i, i}(L_{-1,1}, y) L_{-i, i})$ with  \(a_{-i, i} (L_{-1,1}, y) \in \mathbb{C}\), which  gives
		\begin{align*}
			\Delta (L_{p, 0}+L_{-2 p, 2 p} ) & =D_{L_{-1,1}, L_{p, 0}+L_{-2 p, 2 p}} (L_{p, 0}+L_{-2 p, 2 p} ) \\
			& = \sum_{i \in \mathbb{Z}_{+}}[a_{-i, i}\left(L_{-1,1}, L_{p, 0}+L_{-2 p, 2 p}\right) L_{-i, i}, L_{p, 0}] \\
			& =-\sum_{i \in \mathbb{Z}_{+}}p(i+1) a_{-i, i} (L_{-1,1}, L_{p, 0}+L_{-2 p, 2 p}) L_{-i+p, i}.
		\end{align*}
This together with \eqref{8.1} forces 
\begin{align}\label{ad}
\Delta (L_{p, 0}+L_{-2 p, 2 p})=0. 		
\end{align} 
For any $y \in \mathcal{B}$, by Lemma \ref{prop2}, we can assume that
		\[
		D_{L_{p, 0}+L_{-2 p, 2 p}, y}=\mathrm{ad} (\sum_{(\alpha,i) \in \mathbb{Z}\times\mathbb{Z}_{+}} a_{\alpha, i} (L_{p, 0}+L_{-2 p, 2 p}, y ) L_{\alpha, i})+\lambda (L_{p, 0}+L_{-2 p, 2 p}, y ) d
		\]
for some \(a_{\alpha, i} (L_{p, 0}+L_{-2 p, 2 p}, y), \lambda (L_{p, 0}+L_{-2 p, 2 p}, y) \in \mathbb{C}\). Putting this and \eqref{ad} together gives
		\begin{align}	\label{ac}
			\Delta (L_{p, 0}+L_{-2 p, 2 p})= & D_{L_{p, 0}+L_{-2 p, 2 p}, y} (L_{p, 0}+L_{-2 p, 2 p}) \nonumber\\
			%= & { [\sum_{\alpha \in \mathbb{Z}, i \in \mathbb{Z}_{+}} a_{\alpha, i} (L_{p, 0}+L_{-2 p, 2 p}, y ) L_{\alpha, i}, L_{p, 0}+L_{-2 p, 2 p} ] } \nonumber\\
%			& +\lambda (L_{p, 0}+L_{-2 p, 2 p}, y ) D_{0} (L_{p, 0}+L_{-2 p, 2 p}) \nonumber\\
			= & \sum_{(\alpha,i) \in \mathbb{Z}\times\mathbb{Z}_{+}}(\alpha+i-p i-p) a_{\alpha, i} (L_{p, 0}+L_{-2 p, 2 p}, y) L_{\alpha+p, i} \nonumber\\
			& +\sum_{(\alpha,i) \in \mathbb{Z}\times\mathbb{Z}_{+}}(2 p+1)(\alpha+i) a_{\alpha, i} (L_{p, 0}+L_{-2 p, 2 p}, y) L_{\alpha-2 p, i+2 p} \nonumber\\
			& +p \lambda (L_{p, 0}+L_{-2 p, 2 p}, y) L_{p, 0}-2 p \lambda (L_{p, 0}+L_{-2 p, 2 p}, y) L_{-2 p, 2 p} \nonumber\\
			= & 0. 
		\end{align}
By comparing the coefficients of $L_{p, 0}, L_{-2 p, 2 p}$ and $L_{-(3n+2)p, 2(n+1)p}$ for $n\in\N$  in \eqref{ac}, we respectively have
\begin{eqnarray*}
&-p a_{0,0} (L_{p, 0}+L_{-2 p, 2 p}, y)+p \lambda (L_{p, 0}+L_{-2 p, 2 p}, y)=0,\\
&2p(p+1)a_{-3 p, 2 p}(L_{p, 0}+L_{-2 p, 2 p}, y)+2 p \lambda (L_{p, 0}+L_{-2 p, 2 p}, y)=0,\\
& p(2 np+2p+n+2) a_{-3(n+1) p, 2(n+1) p}(L_{p, 0}+L_{-2 p, 2 p}, y)\\
&+n p(2 p+1) a_{-3 n p, 2 n p}(L_{p, 0}+L_{-2 p, 2 p}, y)=0.
\end{eqnarray*}
These imply that $\lambda (L_{p, 0}+L_{-2 p, 2 p}, y)=a_{0,0} (L_{p, 0}+L_{-2 p, 2 p}, y)=0$, for otherwise we would have $ a_{-3 n p, 2 n p}(L_{p, 0}+L_{-2 p, 2 p}, y)\neq0$ for all $n\in\Z_+$,  which  contradicts the finiteness of the set  $\{(\alpha,i)\mid a_{\alpha, i}\neq0\}$. Now for $(\alpha,i)\in \mathbb{Z}\times\mathbb{Z}_{+}\setminus\{(0,0)\}$ and $n\in\Z_+$, by observing the coefficients of \(L_{\alpha-(3n+2)p, i+2(n+1)p}\) in \eqref{ac}, we get the following recurrence relation
\begin{eqnarray*}
&(\alpha+i-p(n+i+2+2pn+2p))a_{\alpha-3(n+1)p, i+2(n+1)p}(L_{p, 0}+L_{-2 p, 2 p}, y)\\
&=-(2 p+1)(\alpha+i-n p) a_{\alpha-3 n p, i+2 n p}(L_{p, 0}+L_{-2 p, 2 p}, y).
\end{eqnarray*}
If \(\alpha+i \notin p \mathbb{Z}_{+}\), then we must have  \(a_{\alpha, i} (L_{p, 0}+L_{-2 p, 2 p}, y)=0\), as otherwise $ a_{\alpha-3 n p, i+2 n p}(L_{p, 0}+L_{-2 p, 2 p}, y)\neq0$ for all $n\in\Z_+$, which is absurd. This completes the proof. 
	\end{proof}

\begin{lemm}\label{prop8}
	Let $\Delta$ be a 2-local derivation on $\mathcal{B}$ such that \(\Delta (L_{0,0})=\Delta (L_{1,0} )=\Delta (L_{-1,1})=0\). Then $\d(x)=0$ for all $x\in\mathcal{B}$.
\end{lemm}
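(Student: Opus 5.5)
The plan is to combine Lemma \ref{prop5} with Lemma \ref{prop7}, using an auxiliary grading to separate terms. Fix $x=\sum \mu_{\gamma,k}L_{\gamma,k}$. By Lemma \ref{prop5}, $\Delta(x)=\xi_x E(x)$, where $E(x):=\sum k\mu_{\gamma,k}L_{\gamma,k}$. So it suffices to show $\xi_x E(x)=0$.

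Define the degree $\deg L_{\alpha,i}=\alpha+i$. Since $[L_{\alpha,i},L_{\beta,j}]\in\C L_{\alpha+\beta,i+j}$, this is a $\Z$-grading of $\mathcal{B}$. The support of $x$ is finite, so choose $p\in\N$ larger than the spread $\max\deg-\min\deg$ of the degrees occurring in $x$. Apply the 2-local property to the pair $(L_{p,0}+L_{-2p,2p},\,x)$. Lemma \ref{prop7} gives $\Delta(x)=[a,x]$, where $a=\sum a_{\alpha,i}L_{\alpha,i}$ has $\alpha+i\in p\Z_+$ and $(\alpha,i)\neq(0,0)$. Split $a=a_0+a_+$, where $a_0=\sum_{i\ge1}a_{-i,i}L_{-i,i}$ is the degree-$0$ part and $a_+$ has degrees $\ge p$. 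Then $[a_+,x]$ lives in degrees $>\max\deg(x)$, while $\xi_xE(x)$ lives inside the degree range of $x$. Hence $[a_+,x]=0$ and
\[
\xi_x E(x)=[a_0,x],\qquad a_0\in\mathrm{span}\{L_{-i,i}\mid i\ge1\}.
\]

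Next, filter by the second index. Write $x=\sum_k x_k$, where $x_k$ collects the terms $L_{\gamma,k}$. The operator $\mathrm{ad}(a_0)$ strictly raises $k$, whereas $E$ acts on $x_k$ as multiplication by $k$. Let $k_1$ be the smallest $k$ with $x_k\neq0$. Comparing level $k_1$ gives $\xi_xk_1x_{k_1}=0$. If $k_1\ge1$, this forces $\xi_x=0$ and we are done. Also, if $x\in\mathrm{span}\{L_{\gamma,0}\}$, then $E(x)=0$ and there is nothing to prove.

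The main obstacle is the remaining case $k_1=0$ with $x\notin\mathrm{span}\{L_{\gamma,0}\}$. There the triangular comparison only yields a recursion $\xi_x k\,x_k=\sum_{i\ge1}[a_{-i,i}L_{-i,i},x_{k-i}]$, which need not force $\xi_x=0$ immediately. To close this case I would vary $p$: each admissible large $p$ gives its own $a_0^{(p)}$, but $\Delta(x)$ does not depend on $p$. I would also exploit the pair $(L_{n,0},x)$ from the proof of Lemma \ref{prop5}, which gives $\Delta(x)=[\lambda L_{0,0}+\sum_i a_{ni+n-i,i}L_{ni+n-i,i},x]+\lambda d(x)$. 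For large $n$, the terms $L_{ni+n-i,i}$ with $i\ge1$ have very large degree, so by the same degree argument only $\lambda(\mathrm{ad}L_{0,0}+d)$ survives, and this operator acts on $L_{\gamma,k}$ as $-k\lambda$, consistent with $E$. Finally, look at the top level $k_{\max}$ of $x$, where $\mathrm{ad}(a_0)x$ contributes only through brackets $[L_{-i,i},L_{\gamma,0}]=-(i+1)\gamma L_{\gamma-i,i}$. I would test whether choosing $p$ so that $-2p$ falls outside the relevant degree window (forcing additional $a_{-i,i}=0$ via the recursion in Lemma \ref{prop7}) kills $a_0$ on the relevant range. That would give $\xi_xE(x)=0$, i.e.\ $\Delta(x)=0$.
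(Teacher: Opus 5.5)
Your reduction is correct and follows the paper's route. Lemma~\ref{prop5} gives $\Delta(x)=\xi_xE(x)$. Your grading by $\alpha+i$, with $p$ larger than the degree spread of $x$, is a clean justification of the paper's ``let $p$ be large enough''. It yields $\xi_xE(x)=[a_0,x]$ with $a_0\in\mathrm{span}\{L_{-i,i}\mid i\ge1\}$, which is exactly \eqref{vb}, and the lowest-level comparison correctly disposes of $k_1\ge1$.

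However, the case you call the main obstacle is the heart of the lemma, and it is not proved. Your last paragraph only lists things you ``would test''. Its one concrete claim is also wrong: at level $k_{\max}$ the bracket $[a_0,x]$ receives contributions $[a_{-i,i}L_{-i,i},x_{k_{\max}-i}]$ for every $i\ge1$, not only from $x_0$. Revisiting the pairs $(L_{n,0},x)$ merely reproduces Lemma~\ref{prop5}.

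The missing idea is to compare coefficients strictly above the top level of $x$, one degree at a time. Since $[L_{-i,i},L_{\gamma,k}]=-(i+1)(\gamma+k)L_{\gamma-i,k+i}$, the operator $\mathrm{ad}(a_0)$ preserves $\alpha+i$. It kills the degree-$0$ part of $x$, and on the degree-$m$ part it acts as $-m$ times a combination of shifts $k\mapsto k+i$ with $i\ge1$.

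Suppose $a_0\neq0$, and let $i_0$ be maximal with $a_{-i_0,i_0}\neq0$. Suppose also that $x$ has a nonzero component of some degree $m\neq0$, and let $k_{\max}$ be the largest second index in that component. Then the coefficient of $L_{m-k_{\max}-i_0,\,k_{\max}+i_0}$ in $[a_0,x]$ is the single term $-(i_0+1)m\,a_{-i_0,i_0}\mu_{m-k_{\max},k_{\max}}\neq0$. That basis vector does not occur in $\xi_xE(x)$, which is a contradiction. Hence either $a_0=0$ or $x$ has only degree-$0$ components, and in both cases $\Delta(x)=[a_0,x]=0$. No case split on $\xi_x$ or $k_1$ is needed.

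This extremal choice (top second index $k_0$ in its degree, and $i=i_0$) is also what makes the paper's \eqref{nm2} literally exact. For general $(\gamma_0,k_0)$ and $i$, that coefficient is a convolution over all $i'$, which is why your worry about a mere recursion was justified.

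Your final hint can also be completed, but it needs a computation you did not carry out. With $\lambda=a_{0,0}=0$, comparing the coefficients of $L_{p-i,i}$ for $1\le i<2p$ in \eqref{ac} gives $-p(i+1)a_{-i,i}=0$. So $a_0$ only involves $L_{-i,i}$ with $i\ge2p$. Taking $2p$ larger than the spread of second indices in $x$, $[a_0,x]$ lies entirely above the top level of $x$, which forces $\xi_xE(x)=[a_0,x]=0$.
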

\begin{proof}
Take an arbitrary but fixed \(x=\sum_{(\gamma,k) \in \mathbb{Z}\times \mathbb{Z}_{+}} \mu_{\gamma, k} L_{\gamma, k} \in \mathcal{B}\) with \( \mu_{\gamma, k} \in \mathbb{C}\). It follows from  \(\Delta (L_{0,0} )=\Delta (L_{1,0} )=\Delta (L_{-1,1})=0\), Lemmas \ref{prop5} and \ref{prop7} that
	\begin{align}
	\label{gh}\Delta(x)&=\xi_{x} \sum_{(\gamma,k) \in \mathbb{Z}\times \mathbb{Z}_{+}} k\mu_{\gamma, k} L_{\gamma, k},\\
   D_{L_{p, 0}+L_{-2 p, 2 p}, x}&=\mathrm{ad}(\sum_{\substack{\alpha+i \in p \mathbb{Z}_{+}\\(\alpha, i) \neq(0,0)}} a_{\alpha, i} (L_{p, 0}+L_{-2 p, 2 p}, x) L_{\alpha, i} ),\,\,\,\,p\in\N,\nonumber 
	\end{align}
where $\xi_{x}, a_{\alpha, i} (L_{p, 0}+L_{-2 p, 2 p}, x)\in\C$. Now we compute
	\begin{align}\label{ef}
		\Delta(x)= & D_{L_{p, 0}+L_{-2 p, 2 p}, x}(x)\nonumber \\
		= & \sum_{\substack{\alpha+i=0 \\
					(\alpha, i) \neq(0,0)}}\sum_{(\gamma,k) \in \mathbb{Z}\times\mathbb{Z}_{+}} [a_{\alpha, i} (L_{p, 0}+L_{-2 p, 2 p}, x) L_{\alpha, i},  \mu_{\gamma, k} L_{\gamma, k}] \nonumber \\
		& + \sum_{\alpha+i \in p \mathbb{N}}\sum_{(\gamma,k) \in \mathbb{Z}\times\mathbb{Z}_{+}}[ a_{\alpha, i} (L_{p, 0}+L_{-2 p, 2 p}, x) L_{\alpha, i}, \mu_{\gamma, k} L_{\gamma, k}] \nonumber \\
		=& -\sum_{i \in \mathbb{N}} \sum_{(\gamma,k) \in \mathbb{Z}\times\mathbb{Z}_{+}}(i+1)(\gamma+k) a_{-i, i} (L_{p, 0}+L_{-2 p, 2 p}, x) \mu_{\gamma, k} L_{-i+\gamma, i+k} \nonumber\\
		& +\sum_{\alpha+i \in p \mathbb{N}} \sum_{ (\gamma,k) \in \mathbb{Z}\times\mathbb{Z}_{+}}((\alpha-1)(k+1)-(\gamma-1)(i+1)) a_{\alpha, i} (L_{p, 0}+L_{-2 p, 2 p}, x ) L_{\alpha+\gamma, i+k}.   
	\end{align} 
If \(\alpha+i \in p \mathbb{N}\), then by letting $p$ be large enough and equating the coefficients of \(L_{\alpha+\gamma, i+k}\) in \eqref{gh} and \eqref{ef}, we get  $((\alpha-1)(k+1)-(\gamma-1)(i+1)) a_{\alpha, i} (L_{p, 0}+L_{-2 p, 2 p}, x)=0$.  Now \eqref{gh} and \eqref{ef} become
	\begin{align}\label{vb}
\Delta(x)=&\xi_{x} \sum_{(\gamma,k) \in \mathbb{Z}\times \mathbb{Z}_{+}} k\mu_{\gamma, k} L_{\gamma, k}\nonumber\\
=&-\sum_{i \in \mathbb{N}}\sum_{ (\gamma,k) \in \mathbb{Z}\times \mathbb{Z}_{+}}(i+1)(\gamma+k) a_{-i, i} (L_{p, 0}+L_{-2 p, 2 p}, x) \mu_{\gamma, k} L_{-i+\gamma, i+k}.
	\end{align}
In the case \(\xi_{x}=0\), we immediately get  \(\Delta(x)=0\) from the first equality of \eqref{vb}. Next consider the case \(\xi_{x}\neq 0\). If $\mu_{\gamma,k}=0$ for all $\gamma\neq-k$, then  \(\Delta(x)=0\) by the second equality of \eqref{vb}. Assume now that $\mu_{\gamma_{0},k_{0}}\neq0$ for some $\gamma_{0}\neq-k_{0}$. For any $i\in\N$, by comparing the coefficients of  $L_{-i+\gamma_{0}, i+k_{0}}$ in \eqref{vb}, we get
\begin{align}\label{nm2}
%\xi_{x}k\mu_{-k,k}&=0,\\
\xi_{x} (k_0+i) \mu_{\gamma_{0}-i, k_{0}+i}&=-(i+1 )(\gamma_{0}+k_{0}) a_{-i, i} (L_{p, 0}+L_{-2 p, 2 p}, x) \mu_{\gamma_{0}, k_{0}}.
\end{align} 
Therefore,  $a_{-i, i} (L_{p, 0}+L_{-2 p, 2 p}, x )=0$ for any $i\in\N$. If not, suppose  that $a_{-i_{0},i_{0}}(L_{p, 0}+L_{-2 p, 2 p}, x)\neq0$ for some $i_{0}\in\N$. Then from  \eqref{nm2}, we infer that  $\mu_{\gamma_{0}-ni_{0}, k_{0}+ni_{0} }\neq0$ for all $n\in\Z_+$, contradicting the fact that the set $\{(\gamma,k)\mid \mu_{\gamma,k}\neq0\}$ is finite. Consequently, we  get \(\Delta(x)=0\) by the second equality of \eqref{vb}.  So the claim is proved.
\end{proof}
Now we can formulate our main result in this section. 

\begin{theo}\label{nm}
	Every  2-local derivation on the Block-type Lie algebra $\mathcal{B}$ is a derivation.
\end{theo}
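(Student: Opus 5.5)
The plan is the standard reduction: subtract from $\Delta$ a single derivation so that the new 2-local derivation vanishes at $L_{0,0}$, $L_{1,0}$ and $L_{-1,1}$, and then invoke Lemma \ref{prop8}.

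Let $\Delta$ be a 2-local derivation on $\mathcal{B}$. First I choose an element $w\in\mathcal{B}$ adapted to the triple $L_{0,0},L_{1,0},L_{-1,1}$. Since the paper's definition only provides $D_{x,y}$ for pairs, I take $w=L_{0,0}+L_{1,0}$, or another suitable combination. By Lemma \ref{prop2} there is a derivation
\[
D_1=D_{L_{0,0},\,w}=\mathrm{ad}\Big(\sum a_{\alpha,i}L_{\alpha,i}\Big)+\lambda d
\]
with $\Delta(L_{0,0})=D_1(L_{0,0})$ and $\Delta(w)=D_1(w)$. I then set $\Delta_1=\Delta-D_1$. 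This is again a 2-local derivation, because $D_{x,y}-D_1$ is a derivation. Also $\Delta_1(L_{0,0})=0$ and $\Delta_1(w)=0$.

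Next I want to show $\Delta_1(L_{1,0})=0$. For this I apply Lemma \ref{prop3} to $\Delta_1$ with $(\beta,j)=(0,0)$, which gives
\[
D_{L_{0,0},L_{1,0}}=\mathrm{ad}\Big(\sum_i a_{-i,i}L_{-i,i}\Big)+\lambda d.
\]
Consequently $\Delta_1(L_{1,0})=-\sum_i (i+1)a_{-i,i}L_{1-i,i}+\lambda L_{1,0}$. I then combine this with the pair $(w,L_{1,0})$ and use $\Delta_1(w)=0$. By Lemma \ref{prop2}, $D_{w,L_{1,0}}(w)=0$, which constrains $D_{w,L_{1,0}}$; I will write down these constraints explicitly. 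Comparing the two expressions for $\Delta_1(L_{1,0})$ should force it to vanish. If $w=L_{0,0}+L_{1,0}$ does not directly work, I would alternatively realize the vanishing in stages: first make $\Delta(L_{0,0})=0$, then correct by a derivation that kills $L_{0,0}$ (by Lemma \ref{prop3} it has the form $\mathrm{ad}(\sum a_{-i,i}L_{-i,i})+\lambda d$) so as to also match $\Delta_1(L_{1,0})$. The component $\lambda d$ and the elements $\mathrm{ad}(L_{-i,i})$ act on $L_{1,0}$ through $L_{1,0}$ and $L_{1-i,i}$. Since $\Delta_1(L_{1,0})=D_{L_{0,0},L_{1,0}}(L_{1,0})$ lies exactly in the span of these, a correction $D_2=D_{L_{0,0},L_{1,0}}$ computed for $\Delta_1$ gives $\Delta_2=\Delta_1-D_2$ with $\Delta_2(L_{0,0})=0$ and $\Delta_2(L_{1,0})=0$. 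This step is clean because $D_2(L_{0,0})=0=\Delta_1(L_{0,0})$.

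The main obstacle is the third element $L_{-1,1}$. With $\Delta_2(L_{0,0})=\Delta_2(L_{1,0})=0$, Lemma \ref{prop4} gives $\Delta_2(L_{-1,1})=\xi L_{-1,1}$ for some $\xi\in\C$. I need a further derivation $D_3$ killing $L_{0,0}$ and $L_{1,0}$ with $D_3(L_{-1,1})=\xi L_{-1,1}$. By Lemma \ref{prop3}, a derivation killing $L_{0,0}$ has no $d$-component and the form $\mathrm{ad}(\sum_i a_{-i,i}L_{-i,i})$. Killing $L_{1,0}$ as well forces $a_{-i,i}=0$ for $i\ge0$, so only the zero derivation survives, and I must instead show $\xi=0$ directly. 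To do this I use the pair $(L_{-1,1},\,L_{0,0}+L_{1,0})$, or the test elements $L_{p,0}+L_{-2p,2p}$ as in the proof of Lemma \ref{prop7}. I intend to take $x=L_{1,0}+L_{-1,1}$. By Lemma \ref{prop5}, $\Delta_2(x)=\xi_x L_{-1,1}$. On the other hand, $\Delta_2(x)=\Delta_2(L_{-1,1})$ must be compatible with $D_{x,L_{-1,1}}$; comparing the coefficients of $L_{-1,1}$ and $L_{0,1}$, $L_{1,0}$ then forces $\xi=0$. Once $\Delta_2$ vanishes on all three elements, Lemma \ref{prop8} gives $\Delta_2=0$. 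Hence $\Delta=D_1+D_2$ is a derivation.
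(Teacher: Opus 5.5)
There is a genuine gap in your third stage; the first two stages are fine. The detour through $w=L_{0,0}+L_{1,0}$ is unnecessary, since subtracting $D_{L_{0,0},L_{1,0}}$ directly, as your $D_2$ step does, already gives a 2-local derivation vanishing at $L_{0,0}$ and $L_{1,0}$.

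You assert that a derivation killing $L_{0,0}$ has no $d$-component. That misreads Lemma \ref{prop3}: in \eqref{e2} the $d$-term carries the factor $\delta_{\beta,0}$, which equals $1$ for $L_{0,0}$ (indeed $d(L_{0,0})=0$). You used this $\lambda d$ term yourself a few lines earlier. Keeping it, the derivation $\mathrm{ad}(\sum_i a_{-i,i}L_{-i,i})+\lambda d$ kills $L_{1,0}$ if and only if $a_{-i,i}=0$ for $i\ge 1$ and $a_{0,0}=\lambda$. So the derivations killing both $L_{0,0}$ and $L_{1,0}$ are the multiples of $\mathrm{ad}(L_{0,0})+d$. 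This derivation acts by $L_{\beta,j}\mapsto -jL_{\beta,j}$ and sends $L_{-1,1}$ to $-L_{-1,1}$, so it is not zero.

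Consequently the goal you substitute, $\xi=0$, is false in general. The derivation $L_{\beta,j}\mapsto jL_{\beta,j}$ is itself a 2-local derivation and vanishes at $L_{0,0}$ and $L_{1,0}$, so $D_1=D_2=0$ are admissible choices for it, yet it has $\xi=1$. No comparison of coefficients using $x=L_{1,0}+L_{-1,1}$ or the elements $L_{p,0}+L_{-2p,2p}$ can produce the contradiction you want. Your final formula $\Delta=D_1+D_2$ is therefore missing a term.

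The repair is exactly the paper's step. Put $\Delta_3=\Delta_2+\xi(\mathrm{ad}(L_{0,0})+d)$; this is still a 2-local derivation and vanishes at $L_{0,0}$, $L_{1,0}$ and $L_{-1,1}$. Lemma \ref{prop8} then gives $\Delta_3=0$, hence $\Delta=D_1+D_2-\xi(\mathrm{ad}(L_{0,0})+d)$ is a derivation.
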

\begin{proof}
	Let $\d$ be a 2-local derivation on $\mathcal{B}$. Take a derivation \(D_{L_{0,0}, L_{1,0}}\) such that
	\[
	\Delta (L_{0,0})=D_{L_{0,0}, L_{1,0}} (L_{0,0}) \quad \mathrm{and} \quad \Delta (L_{1,0} )=D_{L_{0,0}, L_{1,0}} (L_{1,0}).
	\]
Set \(\Delta_{1}=\Delta-D_{L_{0,0}, L_{1,0}}\). Then $\Delta_{1}$ is a 2-local derivation such that \(\Delta_{1} (L_{0,0} )=\Delta_{1} (L_{1,0} )=0\). It
follows from  Lemma \ref{prop4} that \(\Delta_{1} (L_{-1,1} )=\xi_{L_{-1,1}} L_{-1,1}\) for some \(\xi_{L_{-1,1}} \in \mathbb{C}\). Now we set \(\Delta_{2}=\Delta_{1}+\xi_{L_{-1,1}} \mathrm{ad} (L_{0,0} )+\xi_{L_{-1,1}}d\), then \(\Delta_{2}\) is a 2-local derivation such that
\begin{align*}
	\Delta_{2} (L_{0,0} )&=\Delta_{1}(L_{0,0})+\xi_{L_{-1,1}} [L_{0,0}, L_{0,0}]+\xi_{L_{-1,1}} d (L_{0,0})=0,\\
	\Delta_{2} (L_{1,0} )&=\Delta_{1} (L_{1,0})+\xi_{L_{-1,1}} [L_{0,0}, L_{1,0}]+\xi_{L_{-1,1}} d(L_{1,0})\\
&=-\xi_{L_{-1,1}} L_{1,0}+\xi_{L_{-1,1}} L_{1,0}=0,\\
	\Delta_{2} (L_{-1,1} )&=\Delta_{1} (L_{-1,1})+\xi_{L_{-1,1}} [L_{0,0}, L_{-1,1}]+\xi_{L_{-1,1}}d(L_{-1,1} )\\
&=\xi_{L_{-1,1}} L_{-1,1}-\xi_{L_{-1,1}} L_{-1,1}=0.
\end{align*}
By Lemma \ref{prop8}, we see that $\Delta_{2}=\Delta_{1}+\xi_{L_{-1,1}} \mathrm{ad} (L_{0,0})+\xi_{L_{-1,1}}d \equiv 0$. Therefore, \(\Delta=D_{L_{0,0}, L_{1,0}}-\xi_{L_{-1,1}} \mathrm{ad}(L_{0,0})-\xi_{L_{-1,1}}d\) is a derivation, completing the proof.
\end{proof}
	
	%\subsection*{Acknowledgements}
	%The authors would like to thank the referee for his/her helpful comments and suggestion.

\end{document}